\newtheorem{lem}{Lemma}[section]
\newtheorem{cor}[lem]{Corollary}
\newtheorem{prop}[lem]{Proposition}
\newtheorem{thm}[lem]{Theorem}
  \newtheorem{exama}[section]{Example}
  \newtheorem{conja}{Conjecture}[section]
\numberwithin{equation}{section}
\numberwithin{table}{section}
\renewcommand\phi{\varphi}            
\renewcommand\epsilon{\varepsilon}
\newcommand\Eta{{\mathrm H}}
\begin{document}
%%%%%%%%%%%%%%%%%%%%%%%%%%%%%%%%%%%%%%%%%%%%%%
\title{Oriented Gain Graphs, Line Graphs and Eigenvalues}

\author{Nathan Reff}
\address{Department of Mathematics\\
The College at Brockport: State University of New York\\
Brockport, NY 14420, USA}
\email{nreff@brockport.edu}
\keywords{Orientation on gain graphs, gain graph, complex unit gain graph, line graph, oriented gain graph, voltage graph, line graph eigenvalues}
\subjclass[2010]{Primary  05C22, Secondary 05C50, 05C76, 05C25}

\begin{abstract}
A theory of orientation on gain graphs (voltage graphs) is developed to generalize the notion of orientation on graphs and signed graphs.  Using this orientation scheme, the line graph of a gain graph is  studied.  For a particular family of gain graphs with complex units, matrix properties are established.  As with graphs and signed graphs, there is a relationship between the incidence matrix of a complex unit gain graph and the adjacency matrix of the line graph.
\end{abstract}

\date{\today}
\maketitle
%%%%%%%%%%%%%%%%%%%%%%%%%%%%%%%%%%%%%%%%%%%%%
\section{Introduction}
%%%%%%%%%%%%%%%%%%%%%%%%%%%%%%%%%%%%%%%%%%%%%

The study of acyclic orientations of a graph enjoys a rich history of remarkable results, including Greene's bijection between acyclic orientations and regions of an associated hyperplane arrangement \cite{zbMATH03604927}, as well as Stanley's theorem on the number of acyclic orientations \cite{MR0317988}.  In \cite{MR1120422}, Zaslavsky develops a more general theory of orientation on {\it signed graphs} (graphs with edges labeled either $+1$ or $-1$), and shows that regions defined by a signed graphic hyperplane arrangement correspond with acyclic orientations of particular signed graphs.  Zaslavsky's approach is intimately connected with the theory of oriented matroids \cite{MR1744046}.

A {\it biased graph} is a graph with a list of distinguished cycles, such that if two cycles in the list are contained in a theta graph, then the third cycle of the theta graph must also be in the list \cite{MR1007712}.  Recently, Slilaty has developed an orientation scheme for certain biased graphs and their matroids  \cite{MR2701091}, answering a general orientation question posed by Zaslavsky \cite{MR2017726}. 

A \emph{gain graph} is a graph with the additional structure that each orientation of an edge is given a group element, called a \emph{gain}, which is the inverse of the group element assigned to the opposite orientation.  More recently, Slilaty also worked on real gain graphs and their orientations \cite{DANSGGHA}, generalizing graphic hyperplane arrangements.

In this paper, we define a notion of orientation for gain graphs over an arbitrary group.  The orientation provides two immediate applications.  First, a natural method for studying line graphs of gain graphs.  And second, a well defined incidence matrix.  In particular, gain graphs with complex unit gains (also called {\it complex unit gain graphs}) have particularly nice matrix and eigenvalue properties, which were initially investigated in \cite{MR2900705}.

The approach here fits the original orientation methods of Zaslavsky on signed graphs, and opens possibilities for future research.  A major question left to answer is what an acyclic orientation is under this setup.  This could lead to further connections with hyperplane arrangements and matroids.  Additional questions for future projects are also posed throughout the paper.

The reader may also be interested in other recent independent investigations of complex unit gain graphs and specializations that appear in the literature.  These include {\it weighted directed graphs} \cite{MR2859913, MR2928567, MR2929181, MR3045222, MR3191876}, which consider gains from the fourth roots of unity instead of the entire unit circle, and so called {\it Hermitian graphs} used to study universal state transfer \cite{MR3217403}.  A study of the characteristic polynomial for gain graphs has also been conducted in \cite{MR2890908}.

The paper is organized as follows.  In Section \ref{Sec2}, a background on the theory of gain graphs is provided.  In Section \ref{SECTIONOrientedGainGraphDEF}, we develop oriented gain graphs. The construction introduced borrows from Edmonds and Johnson's definition of a bidirected graph \cite{MR0267898} and Zaslavsky's more general oriented signed graphs \cite{MR1120422}.  Using this construction the line graph of an oriented gain graph is defined and studied in Section \ref{LineGraphsofAbelianGainGraphs}.  If the gain group is abelian, the line graph of an oriented gain graph is used to define the line graph of a gain graph.  This generalizes Zaslavsky's definition of the line graph of a signed graph \cite{MITTOSSG}.

Finally, in Section \ref{SECTIONMatricesofORIENTEDGAINGRAPHS} we discuss several matrices associated to these various graphs and line graphs.  For complex unit gain graphs, we generalize the classical relationship known for graphs and signed graphs between the incidence matrix of a graph and the adjacency matrix of the line graph.  This is used to study the adjacency eigenvalues of the line graph.

\section{Background}\label{Sec2}
%=========================================================
%=========================================================
%=========================================================

The set of oriented edges, denoted by $\vec{E}(\Gamma)$, contains two copies of each edge with opposite directions.  An oriented edge from $v_i$ to $v_j$ is denoted by $e_{ij}$.  Formally, a \emph{gain graph} is a triple $\Phi=(\Gamma,\mathfrak{G},\phi)$ consisting of an \emph{underlying graph} $\Gamma=(V,E)$, the \emph{gain group} $\mathfrak{G}$ and a function $\phi :\vec{E}(\Gamma) \rightarrow \mathfrak{G}$ (called the \emph{gain function}), such that $\phi(e_{ij})=\phi(e_{ji})^{-1}$.  For brevity, we write $\Phi=(\Gamma,\phi)$ for a gain graph if the gain group is clear, and call $\Phi$ a $\mathfrak{G}$-gain graph.

The \emph{circle group} is $\mathbb{T}=\{ z\in\mathbb{C} : |z| =1\}\leq \mathbb{C}^{\times}$.  One particular family of gain graphs that we will be interested in are $\mathbb{T}$-gain graphs (or \emph{complex unit gain graphs}).  The center of a group $\mathfrak{G}$ is denoted by $Z(\mathfrak{G})$.  %The group of $m^{\text{th}}$ roots of unity is denoted  $\boldsymbol\mu_m:=\{\zeta^k : k\in \{0,\ldots,m-1\} \text{ and } \zeta=e^{2\pi i/m}\}\leq \mathbb{C}^{\times}$. 

We will always assume that $\Gamma$ is simple.  The set of vertices is $V:=\{v_1,v_2,\ldots,v_n\}$.  Edges in $E$ are denoted by $e_{ij}=v_i v_j$.   Even though this is the same notation for an oriented edge from $v_i$ to $v_j$ it will always be clear whether an edge or oriented edge is being used. We define $n:=|V|$ and $m:=|E|$. 

A \emph{switching function} is any function $\zeta:V\rightarrow \mathfrak{G}$.  Switching the $\mathfrak{G}$-gain graph $\Phi=(\Gamma,\phi)$ means replacing $\phi$ by $\phi^{\zeta}$, defined by: $\phi^{\zeta}(e_{ij})=\zeta(v_i)^{-1} \phi(e_{ij}) \zeta(v_j)$; producing the $\mathfrak{G}$-gain graph $\Phi^{\zeta}=(\Gamma,\phi^{\zeta})$.  We say $\Phi_1$ and $\Phi_2$ are \emph{switching equivalent}, written $\Phi_1 \sim \Phi_2$, when there exists a switching function $\zeta$, such that $\Phi_2=\Phi_1^{\zeta}$.  Switching equivalence forms an equivalence relation on gain functions for a fixed underlying graph.  An equivalence class under this equivalence relation is called a \emph{switching class} of $\phi$, and is denoted by $[\Phi]$.

The gain of a walk $W=v_1e_{12}v_2e_{23}v_3\cdots v_{k-1}e_{k-1,k}v_k$ is $\phi(W)=\phi(e_{12})\phi(e_{23})\cdots\phi(e_{k-1,k})$.  %A walk $W$ is \emph{neutral} if $\phi(W)=1$.  An edge set $S\subseteq E$ is \emph{balanced} if every cycle $C\subseteq S$ is neutral.  A subgraph is \emph{balanced} if its edge set is balanced. 
Switching conjugates the gain of a closed walk in a gain graph.
\begin{prop}\label{conjugateWalk} Let $W=v_1e_{12}v_2e_{23}v_3\cdots v_{k}e_{k1}v_1$ be a closed walk in a $\mathfrak{G}$-gain graph $\Phi$.  Let $\zeta:V\rightarrow \mathfrak{G}$.  Then $\phi^{\zeta}(W)=\zeta(v_1)^{-1} \phi(W)\zeta(v_1)$.
\end{prop}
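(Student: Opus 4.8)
The plan is to expand $\phi^{\zeta}(W)$ directly from the definition of the gain of a walk together with the switching rule, and then collapse the resulting product telescopically. First I would write the gain of the switched closed walk as the product over its edges, $\phi^{\zeta}(W)=\phi^{\zeta}(e_{12})\phi^{\zeta}(e_{23})\cdots\phi^{\zeta}(e_{k1})$, and substitute the switching formula $\phi^{\zeta}(e_{ij})=\zeta(v_i)^{-1}\phi(e_{ij})\zeta(v_j)$ into each factor. This turns $\phi^{\zeta}(W)$ into a single long product in $\mathfrak{G}$ in which each edge contributes a left factor $\zeta(\cdot)^{-1}$, an original gain, and a right factor $\zeta(\cdot)$.

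The key observation is that consecutive edges in the walk share a vertex, so the right factor $\zeta(v_j)$ contributed by $e_{ij}$ is immediately followed by the left factor $\zeta(v_j)^{-1}$ contributed by the next edge $e_{j\ell}$, and these cancel. Iterating this cancellation along the walk leaves only the outermost factors, namely $\zeta(v_1)^{-1}$ at the start and $\zeta(v_1)$ at the end, with the original product $\phi(e_{12})\phi(e_{23})\cdots\phi(e_{k1})=\phi(W)$ surviving in between. Because $W$ is \emph{closed}, its terminal vertex coincides with its initial vertex $v_1$, which is exactly why both surviving boundary factors involve the same element $\zeta(v_1)$; this is the only place closedness of $W$ is used, and it is what makes the result a conjugation rather than a bilateral product of distinct elements.

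I expect no genuine obstacle here: the statement is a direct telescoping identity, and the substance is entirely bookkeeping. The one point requiring care is that $\mathfrak{G}$ may be nonabelian, so factors cannot be reordered and the cancellations must be carried out strictly in sequence. To keep this rigorous I would either track the vertex indices explicitly through the product or, more cleanly, argue by induction on the length $k$ of the walk: assuming the identity for the initial segment $v_1e_{12}\cdots v_k$ and then appending the final edge $e_{k1}$, the inductive factor $\zeta(v_k)$ meets the incoming $\zeta(v_k)^{-1}$ and cancels. Either route reduces to associativity of the group product and the relation $\zeta(v_j)\zeta(v_j)^{-1}=1$, yielding $\phi^{\zeta}(W)=\zeta(v_1)^{-1}\phi(W)\zeta(v_1)$ as claimed.
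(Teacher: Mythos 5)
Your proposal is correct and follows exactly the paper's own argument: expand $\phi^{\zeta}(W)$ edge by edge via the switching formula and telescope the interior factors $\zeta(v_j)\zeta(v_j)^{-1}$, leaving the conjugation by $\zeta(v_1)$. Your added remarks on noncommutativity and the role of closedness are sound but do not change the substance.
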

\begin{proof} The following calculation verifies the result:
\begin{align*}
\phi^{\zeta}(W) &= \phi^{\zeta}(e_{12})\phi^{\zeta}(e_{23})\cdots\phi^{\zeta}(e_{k1})\\
&=\zeta(v_1)^{-1}\phi(e_{12})\zeta(v_2)\zeta(v_2)^{-1}\phi(e_{23})\cdots\zeta(v_k)^{-1}\phi(e_{k1})\zeta(v_1)\\
&= \zeta(v_1)^{-1}\phi(W)\zeta(v_1).\qedhere
\end{align*}
\end{proof}

If $C$ is a cycle in the underlying graph $\Gamma$, then the gain of $C$ in some fixed direction starting from a base vertex $v$ is denoted by $\phi(\vec{C}_v)$.  Thus, $\vec{C}_v$ is called a \emph{directed cycle with base vertex $v$}.  The \emph{fundamental cycle} associated with edge $e$, denoted $C_T(e)$, is the cycle obtained by adding $e$ to a maximal forest of $\Gamma$.  For abelian $\mathfrak{G}$ we can state a sufficient condition to guarantee two $\mathfrak{G}$-gain graphs are switching equivalent.  The proof here is an adaptation of a well known construction on signed graphs, see for example \cite[Theorem II.A.4]{Math581Notes}.

%\begin{lem}[\cite{}]\label{fundcyclem}  Every cycle in a maximal forest $F$ is a symmetric difference of fundamental circles in a unique way.
%\end{lem}

%If $C$ is a cycle in the underlying graph $\Gamma$ we may write $\phi(\vec{C})$ to denote the gain of $C$ in some fixed direction starting from a fixed vertex. 

\begin{lem}\label{SwitchingFExistence} Let $\mathfrak{G}$ be abelian.  Let $\Phi_1$ and $\Phi_2$ be $\mathfrak{G}$-gain graphs with the same underlying graph $\Gamma$.  If for every cycle $C$ in $\Gamma$ there exists a directed cycle with base vertex $v$ such that $\phi_1(\vec{C}_v)=\phi_2(\vec{C}_v)$, then there exists a switching function $\zeta$ such that $\Phi_2=\Phi_1^{\zeta}$.
\end{lem}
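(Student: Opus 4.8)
The plan is to reduce the statement to a \emph{balance} condition for a single auxiliary gain function and then to build the switching function along a spanning forest. Since $\mathfrak{G}$ is abelian, the equation $\Phi_2=\Phi_1^{\zeta}$ we must solve, namely $\zeta(v_i)^{-1}\phi_1(e_{ij})\zeta(v_j)=\phi_2(e_{ij})$ for every edge, rearranges to $\zeta(v_i)^{-1}\zeta(v_j)=\phi_1(e_{ij})^{-1}\phi_2(e_{ij})$. So first I would introduce the difference gain function $\psi$ defined by $\psi(e_{ij})=\phi_1(e_{ij})^{-1}\phi_2(e_{ij})$; using commutativity one checks $\psi(e_{ji})=\psi(e_{ij})^{-1}$, so $(\Gamma,\psi)$ is again a $\mathfrak{G}$-gain graph. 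Finding $\zeta$ with $\Phi_2=\Phi_1^{\zeta}$ is then exactly the same as finding $\zeta$ with $\zeta(v_i)^{-1}\zeta(v_j)=\psi(e_{ij})$ for all edges, i.e.\ making $\psi$ switching equivalent to the identity gain function $\mathbf{1}$.

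Next I would translate the hypothesis into the statement that $\psi$ is balanced, meaning $\psi(\vec{C})=1$ for every cycle $C$. Because $\mathfrak{G}$ is abelian, moving the base vertex of a closed walk changes its gain only by conjugation (the same computation as in Proposition~\ref{conjugateWalk}), and reversing the direction simply inverts the gain; hence the hypothesized equality $\phi_1(\vec{C}_v)=\phi_2(\vec{C}_v)$ for one base vertex and direction forces $\phi_1(\vec{C})=\phi_2(\vec{C})$ for every base and direction. Multiplying the edge gains around $C$ and again using commutativity gives $\psi(\vec{C})=\phi_1(\vec{C})^{-1}\phi_2(\vec{C})=1$, so $\psi$ is balanced.

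Finally I would construct $\zeta$ explicitly from a maximal forest $T$ of $\Gamma$. Choose a root in each component, set $\zeta$ equal to the group identity there, and for any other vertex $v$ put $\zeta(v)=\psi(P_v)$, the $\psi$-gain of the unique tree path $P_v$ from the root to $v$. For a tree edge $e_{ij}$ the telescoping of these path products yields $\zeta(v_i)^{-1}\zeta(v_j)=\psi(e_{ij})$ immediately. The one real step is the non-tree edges: for such an $e_{ij}$ I would apply balance to the fundamental cycle $C_T(e_{ij})$. The tree path from $v_i$ to $v_j$ has $\psi$-gain $\zeta(v_i)^{-1}\zeta(v_j)$ (telescoping through the least common ancestor, valid since $\mathfrak{G}$ is abelian), and balance of $C_T(e_{ij})$ then forces $\psi(e_{ij})=\zeta(v_i)^{-1}\zeta(v_j)$ as well. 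Thus $\zeta(v_i)^{-1}\zeta(v_j)=\psi(e_{ij})$ holds on every edge, and unwinding the definition of $\psi$ (commuting $\phi_1$ past $\zeta(v_i)^{-1}$) gives $\phi_2=\phi_1^{\zeta}$. The main obstacle is precisely this extension from the forest to the chords; everything else is bookkeeping, and the only places where commutativity is essential are the base-point independence of cycle gains and the telescoping identity for tree paths.
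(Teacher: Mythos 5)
Your proof is correct and takes essentially the same route as the paper: pick a spanning tree, define $\zeta$ by telescoping along tree paths (your $\zeta(v)=\psi(P_v)$ is exactly the paper's recursively defined switching function, since on each tree edge it unwinds to $\zeta(v_i)=\phi_1(e_{ij})\zeta(v_j)\phi_2(e_{ij})^{-1}$), and then settle the non-tree edges via the fundamental cycles, using commutativity for base-point independence. The only difference is packaging: you factor the argument through the difference gain $\psi=\phi_1^{-1}\phi_2$ and its balance, whereas the paper works with $\phi_1^{\zeta}$ and $\phi_2$ directly and cancels the tree-edge gains from the product equality around $C_T(f)$ --- the same cancellation in different notation.
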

\begin{proof} 
Switching individual components is independent, so we may assume that $\Gamma$ is connected.

Pick a spanning tree $T$ of $\Gamma$, and label the vertices so that for all $i\in \{2,\ldots, n\}$, $v_i$ is always adjacent to a vertex in $\{v_1,\ldots,v_{i-1}\}$.  

For all $i\in\{2,\ldots,n\}$, let $e_{ij}$ be the unique oriented edge in $\vec{E}(T)$ from $v_i$ to $v_j \in \Phi{:}\{v_1,\ldots,v_{i-1}\}$.  We define $\zeta:V\rightarrow \mathfrak{G}$ recursively by
\begin{equation*}
\zeta(v_i)=
\begin{cases} 1_{\mathfrak{G}} & \text{if }i=1,
\\
\phi_1(e_{ij})\zeta(v_j)\phi_2(e_{ij})^{-1} &\text{otherwise.}
\end{cases}
\end{equation*}
Therefore, for all $e_{ij}\in \vec{E}(T)$, $\phi_2(e_{ij})=\phi_1^{\zeta}(e_{ij})$.  Now we need to verify that $\phi_2$ and $\phi_1^{\zeta}$ agree on the oriented edges of $\Gamma$ that are not part of $\vec{E}(T)$.

Let $f\in E(C)\backslash E(T)$.  By hypothesis, there is some directed cycle with base vertex $v$ such that, $\phi_1(\vec{C}_v)=\phi_2(\vec{C}_v)$, and therefore, $\phi_1(\overrightarrow{C_T(f)}_v)=\phi_2(\overrightarrow{C_T(f)}_v)$.  Furthermore, by Proposition \ref{conjugateWalk}, $\phi_1^{\zeta}(\overrightarrow{C_T(f)}_v)=\phi_1(\overrightarrow{C_T(f)}_v)$ since $\mathfrak{G}$ is abelian.  Hence, $\phi_2(\overrightarrow{C_T(f)}_v)=\phi_1^{\zeta}(\overrightarrow{C_T(f)}_v)$.

Without loss of generality, suppose $\overrightarrow{C_T(f)}_v=e_{12}e_{23}\cdots e_{k-1,k}e_{k1}$ and $f=e_{k1}$.  Since $\phi_2(\overrightarrow{C_T(f)}_v)=\phi_1^{\zeta}(\overrightarrow{C_T(f)}_v)$ we can write
\[ \phi_2(e_{12})\phi_2(e_{23})\cdots \phi_2(e_{k-1,k})\phi_2(e_{k1})= \phi_1^{\zeta}(e_{12})\phi_1^{\zeta}(e_{23})\cdots \phi_1^{\zeta}(e_{k-1,k})\phi_1^{\zeta}(e_{k1}).\]
Also, since $\phi_2(e_{ij})=\phi_1^{\zeta}(e_{ij})$ for every $e_{ij}\in \vec{E}(T)$, by taking inverses we can simplify the product equality to $\phi_2(e_{k1})=
\phi_1^{\zeta}(e_{k1})$.  Hence, $\phi_2$ and $\phi_1^{\zeta}$ agree on all oriented edges.  
\end{proof} 

%=========================================================
%=========================================================
%=========================================================
\section{Oriented Gain Graphs}\label{SECTIONOrientedGainGraphDEF}
%=========================================================
%=========================================================
%=========================================================
A \emph{$\mathfrak{G}$-phased graph} is a pair $(\Gamma,\omega)$, consisting of a graph $\Gamma$, and a \emph{$\mathfrak{G}$-incidence phase function}
$\omega : V\times E\rightarrow \mathfrak{G}\cup \{0\}$ that satisfies
\begin{align*}
\omega(v,e)&\neq 0 \text{ if $v$ is incident to $e$},\\
\omega(v,e)&= 0 \text{ otherwise}.
\end{align*}

A $\mathfrak{G}$-phased graph with $\text{Image}(\omega)\subseteq \{+1,0,-1\}$ is called a \emph{bidirected graph}, and we call $\omega$ a \emph{bidirection}.  If $\beta$ is a bidirection then $\beta(v,e)=+1$ can be thought of as an edge-end arrow proceeding into $v$ and $\beta(v,e)=-1$ as an edge-end arrow exiting $v$.  See Figure \ref{GraphBidandPhasedEx} for some examples.

\begin{figure}[h!]
    \includegraphics[scale=0.65]{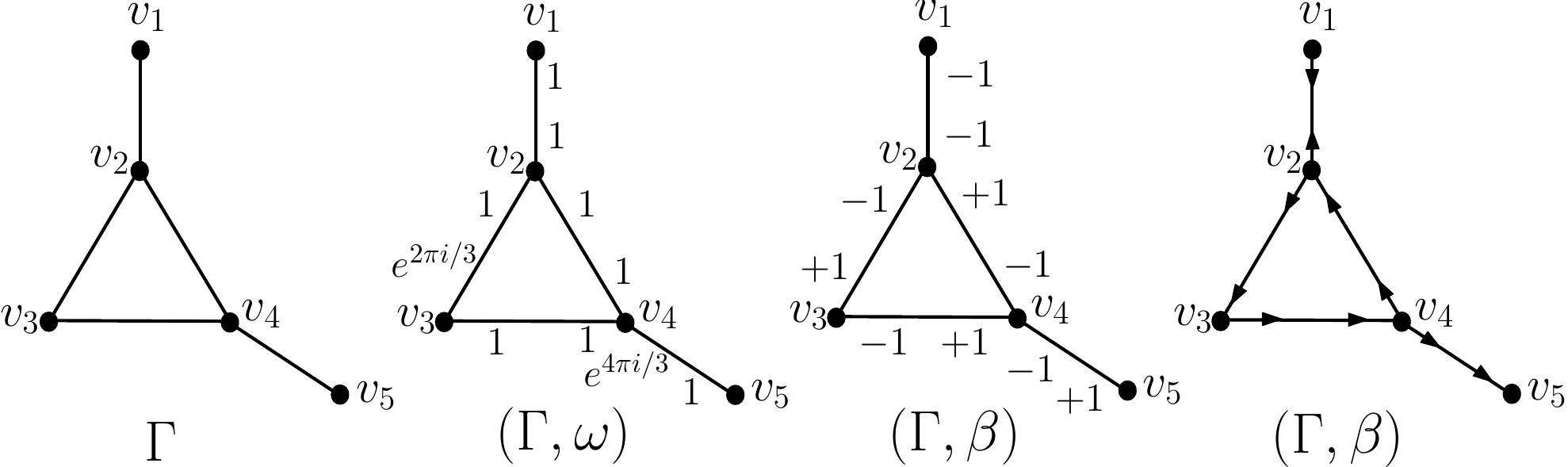}\centering
    \caption{A graph $\Gamma$, a $\mathbb{T}$-phased graph $(\Gamma,\omega)$, a bidirected graph $(\Gamma,\beta)$ and the same bidirected graph using the arrow convention instead of labeling the incidences.}\label{GraphBidandPhasedEx}
\end{figure}

A (weak) \emph{involution} of a group $\mathfrak{G}$ is any element $\mathfrak{s}\in\mathfrak{G}$ such that $\mathfrak{s}^2=1_{\mathfrak{G}}$.  Henceforth, all involutions are weak involutions, thus the group identity will also be called an involution.

Let $\mathfrak{G}^{\mathfrak{s}}$ be a group with a distinguished central involution $\mathfrak{s}$.  Formally, $\mathfrak{s}$ is a distinguished involution of $\mathfrak{G}^{\mathfrak{s}}$ such that $\mathfrak{s}\in Z(\mathfrak{G}^{\mathfrak{s}})$.  The notation $\mathfrak{G}^{\mathfrak{s}}$ is to clearly indicate which choice one has made for $\mathfrak{s}$, so the groups $\mathfrak{G}^{\mathfrak{s}}$ and $\mathfrak{G}^{\mathfrak{t}}$ may be equal even though $\mathfrak{s}\neq \mathfrak{t}$.  For example, $\mathbb{T}^1=\mathbb{T}^{-1}=\mathbb{T}$ as groups, but the distinguished central involution of interest changes when writing $\mathbb{T}^1$ and $\mathbb{T}^{-1}$. 

Let $\omega$ be a $\mathfrak{G}^{\mathfrak{s}}$-incidence phase function.  The \emph{$\mathfrak{G}^{\mathfrak{s}}$-gain graph associated to a $\mathfrak{G}^{\mathfrak{s}}$-phased graph $(\Gamma,\omega)$}, denoted by $\Phi(\omega)$, has its gains defined by
\begin{equation}\label{orientationREL}
\phi(e_{ij})=\omega(v_i,e_{ij})\cdot\mathfrak{s}\cdot\omega(v_j,e_{ij})^{-1}. 
\end{equation}

Notice that because $\mathfrak{s}$ is an involution,
\begin{align*}
\phi(e_{ij}) &= \omega(v_i,e_{ij})\cdot\mathfrak{s}\cdot\omega(v_j,e_{ij})^{-1} \\
&= \big[\omega(v_j,e_{ij})\cdot\mathfrak{s}^{-1}\cdot\omega(v_i,e_{ij})^{-1}\big]^{-1}\\ 
&= \big[\omega(v_j,e_{ij})\cdot\mathfrak{s}\cdot\omega(v_i,e_{ij})^{-1}\big]^{-1}\\
&=\phi(e_{ji})^{-1}.
\end{align*}
This means $\Phi(\omega)$ really is a $\mathfrak{G}^{\mathfrak{s}}$-gain graph with appropriately labeled oriented edges.  

See Figures \ref{GraphPhasedAssociated} and \ref{GraphPhasedAssociatedMINUS} for distinguishing examples of $\Phi(\omega)$.  To make the pictures less cluttered each edge will have only one oriented edge labelled with a gain (since the gain in the opposite direction is immediately determined as the inverse).

\begin{figure}[h!]
    \includegraphics[scale=0.65]{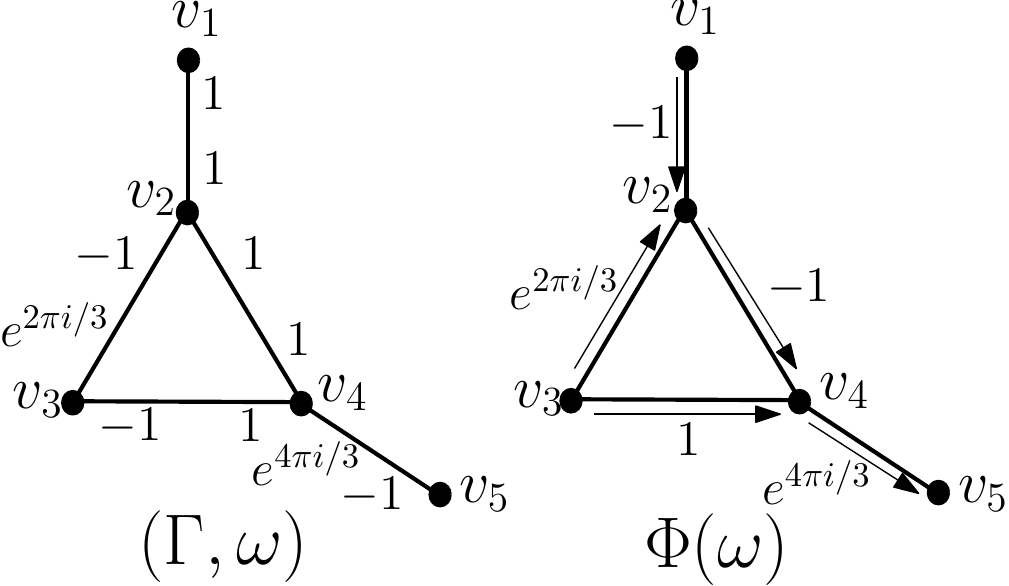}\centering
    \caption{A $\mathbb{T}^{-1}$-phased graph $(\Gamma,\omega)$ and the associated $\mathbb{T}^{-1}$-gain graph $\Phi(\omega)$.}\label{GraphPhasedAssociated}
\end{figure}

\begin{figure}[h!]
    \includegraphics[scale=0.65]{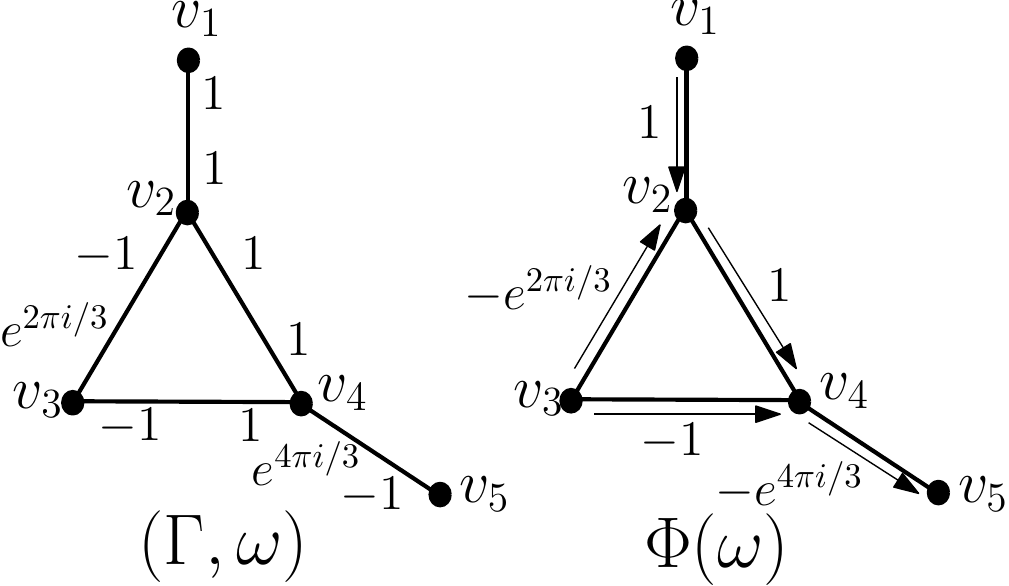}\centering
    \caption{A $\mathbb{T}^{1}$-phased graph $(\Gamma,\omega)$ and the associated $\mathbb{T}^{1}$-gain graph $\Phi(\omega)$.}\label{GraphPhasedAssociatedMINUS}
\end{figure}

%Since $\mathfrak{s}\in Z(\mathfrak{G}^{\mathfrak{s}})$ we can further simplify the calculation as
%\[\phi(e_{ij})=\big[\mathfrak{s}\cdot\omega(v_j,e_{ij})\cdot\omega(v_i,e_{ij})^{-1}\big]^{-1}=\phi(e_{ji})^{-1}.\]
%This is one of the reasons we want $\mathfrak{s}$ to be central.  We want this new structure to naturally fit into the theory of gain graphs.  We could alternatively define the relation in Equation \eqref{orientationREL} as $\phi(e_{ij})= \omega(v_i,e_{ij})\cdot \mathfrak{s}\cdot\omega(v_j,e_{ij})^{-1}$, and so
%\[ \phi(e_{ij}) = [\omega(v_j,e_{ij})\cdot\mathfrak{s}^{-1}\cdot\omega(v_i,e_{ij})^{-1}\big]^{-1}=[\omega(v_j,e_{ij})\cdot\mathfrak{s}\cdot\omega(v_i,e_{ij})^{-1}\big]^{-1}=\phi(e_{ji}). \]

%We are assuming $\omega=(\alpha,\tau) : V\times E\rightarrow \pm\mathfrak{G}\cup \{0\}$ so that $\alpha: V\times E\rightarrow \{-1,0,+1\}$ and $\tau : V\times E\rightarrow \mathfrak{G}\cup \{0\}$.  See Figure \ref{GraphPhasedAssociated} for an example.

For a $\mathfrak{G}^{\mathfrak{s}}$-gain graph $\Phi$, an \emph{orientation of $\Phi$} is any $\mathfrak{G}^{\mathfrak{s}}$-incidence phase function $\omega$ that satisfies Equation \eqref{orientationREL}.  An \emph{oriented $\mathfrak{G}^{\mathfrak{s}}$-gain graph} is a pair $(\Phi,\omega)$, consisting of a $\mathfrak{G}^{\mathfrak{s}}$-gain graph $\Phi$, and $\omega$, an orientation of $\Phi$.  To obtain an oriented signed graph \cite{MR1120422} one can choose $\mathfrak{G}=\{+1,-1\}$ with $\mathfrak{s}=-1$.

Switching a $\mathfrak{G}^{\mathfrak{s}}$-incidence phase function $\omega$ by $\zeta:V\rightarrow \mathfrak{G}^{\mathfrak{s}}$ means replacing $\omega$ by $\omega^{\zeta}$ defined by $\omega^{\zeta}(v_i,e_{ij})=\zeta(v_i)^{-1}\omega(v_i,e_{ij})$.  The $\mathfrak{G}^{\mathfrak{s}}$-gain graph associated to the switched $\omega^{\zeta}$ is the same as the $\zeta$-switched $\mathfrak{G}^{\mathfrak{s}}$-gain graph associated to $\omega$.  This generalizes the same result known for signed graphs \cite{MR1120422}.

\begin{prop} If $\zeta:V\rightarrow \mathfrak{G}^{\mathfrak{s}}$, then $\Phi(\omega^{\zeta})=\Phi(\omega)^{\zeta}$.
\end{prop}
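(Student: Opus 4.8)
The plan is to show that both $\Phi(\omega^{\zeta})$ and $\Phi(\omega)^{\zeta}$ are gain graphs on the same underlying graph $\Gamma$, and then to verify that their gain functions agree on every oriented edge by direct substitution. First I would observe that $\omega^{\zeta}$ is a genuine $\mathfrak{G}^{\mathfrak{s}}$-incidence phase function: because each value $\zeta(v_i)$ is an invertible group element, the product $\zeta(v_i)^{-1}\omega(v_i,e)$ vanishes exactly when $\omega(v_i,e)$ does, so switching by $\zeta$ preserves the incidence pattern. Consequently $\Phi(\omega^{\zeta})$ is well defined, and it shares the underlying graph $\Gamma$ with $\Phi(\omega)^{\zeta}$; thus it suffices to compare gains edge by edge.

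For the gain graph $\Phi(\omega^{\zeta})$, the gain of $e_{ij}$ is $\omega^{\zeta}(v_i,e_{ij})\cdot\mathfrak{s}\cdot\omega^{\zeta}(v_j,e_{ij})^{-1}$ by \eqref{orientationREL}. Substituting $\omega^{\zeta}(v_i,e_{ij})=\zeta(v_i)^{-1}\omega(v_i,e_{ij})$ and using the order reversal $\big(\zeta(v_j)^{-1}\omega(v_j,e_{ij})\big)^{-1}=\omega(v_j,e_{ij})^{-1}\zeta(v_j)$, this equals
\[
\zeta(v_i)^{-1}\,\omega(v_i,e_{ij})\cdot\mathfrak{s}\cdot\omega(v_j,e_{ij})^{-1}\,\zeta(v_j).
\]
For the switched gain graph $\Phi(\omega)^{\zeta}$, the definition of switching gives the gain $\zeta(v_i)^{-1}\phi(e_{ij})\zeta(v_j)$, where $\phi(e_{ij})=\omega(v_i,e_{ij})\cdot\mathfrak{s}\cdot\omega(v_j,e_{ij})^{-1}$ is the gain in $\Phi(\omega)$. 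Substituting this $\phi(e_{ij})$ yields precisely the same product displayed above, so the two gain functions coincide on every $e_{ij}$.

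I expect no serious obstacle here; this is a routine verification rather than a subtle argument. The only two points that require care are the order reversal in the inverse of the product $\zeta(v_j)^{-1}\omega(v_j,e_{ij})$, which is exactly what lines the trailing factor $\zeta(v_j)$ up with the switching formula, and the consistent treatment of the formal symbol $0$ under left multiplication by group elements so that $\omega^{\zeta}$ remains a legitimate incidence phase function. Notice that, unlike the verification that $\Phi(\omega)$ is a gain graph, this computation does not invoke centrality of $\mathfrak{s}$; the distinguished involution simply passes through the calculation untouched.
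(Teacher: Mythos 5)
Your proof is correct and follows essentially the same route as the paper: a direct substitution of $\omega^{\zeta}(v,e)=\zeta(v)^{-1}\omega(v,e)$ into Equation \eqref{orientationREL}, with the order-reversed inverse $\omega^{\zeta}(v_j,e_{ij})^{-1}=\omega(v_j,e_{ij})^{-1}\zeta(v_j)$ producing exactly the switched gain $\zeta(v_i)^{-1}\phi_{\omega}(e_{ij})\zeta(v_j)$. Your added remarks (that $\omega^{\zeta}$ preserves the incidence pattern and that centrality of $\mathfrak{s}$ is never invoked) are accurate but supplementary; the core computation coincides with the paper's.
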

\begin{proof} We compute $\phi(e_{ij})$ in $\Phi(\omega^{\zeta})$ as follows:
\begin{align*}
\phi_{\omega^{\zeta}}(e_{ij})&=\omega^{\zeta}(v_i,e_{ij})\cdot\mathfrak{s}\cdot\omega^{\zeta}(v_j,e_{ij})^{-1}\\
&=\zeta(v_i)^{-1}\omega(v_i,e_{ij})\cdot\mathfrak{s}\cdot\omega(v_j,e_{ij})^{-1}\zeta(v_j)\\
&=\zeta(v_i)^{-1}\phi_{\omega}(e_{ij})\zeta(v_j)\\
&=\phi_{\omega}^{\zeta}(e_{ij}).\qedhere
\end{align*}
\end{proof}

\noindent {\bf Question 1:} What is an acyclic orientation in an oriented $\mathfrak{G}^{\mathfrak{s}}$-gain graph?  The answer to this could have some interesting connections to matroids and hyperplane arrangement theory.  This study could further generalize the results of Greene \cite{zbMATH03604927}, Zaslavsky \cite{MR1120422} and Slilaty \cite{DANSGGHA}.

%=========================================================
%=========================================================
%=========================================================
\section{The Line Graph of an Abelian Gain Graph}\label{LineGraphsofAbelianGainGraphs}
%=========================================================
%=========================================================
%=========================================================

Let $\Lambda_{\Gamma}$ denote the line graph of the unsigned graph $\Gamma$.  Let $\omega_{\Lambda}$ be a $\mathfrak{G}^{\mathfrak{s}}$-incidence phase function on $\Lambda_{\Gamma}$ defined by 
\begin{equation}\label{LGincidencerel}
\omega_{\Lambda}(e_{ij},e_{ij}e_{jk})=\omega(v_j,e_{ij})^{-1}.
\end{equation}  
Let $(\Phi,\omega)$ be an oriented $\mathfrak{G}^{\mathfrak{s}}$-gain graph.  The \emph{line graph of $(\Phi,\omega)$} is the oriented $\mathfrak{G}^{\mathfrak{s}}$-gain graph $(\Phi(\omega_{\Lambda}),\omega_{\Lambda})$.  See Figure \ref{OLinegraphExample} for an example.
\begin{figure}[h!]
    \includegraphics[scale=0.6]{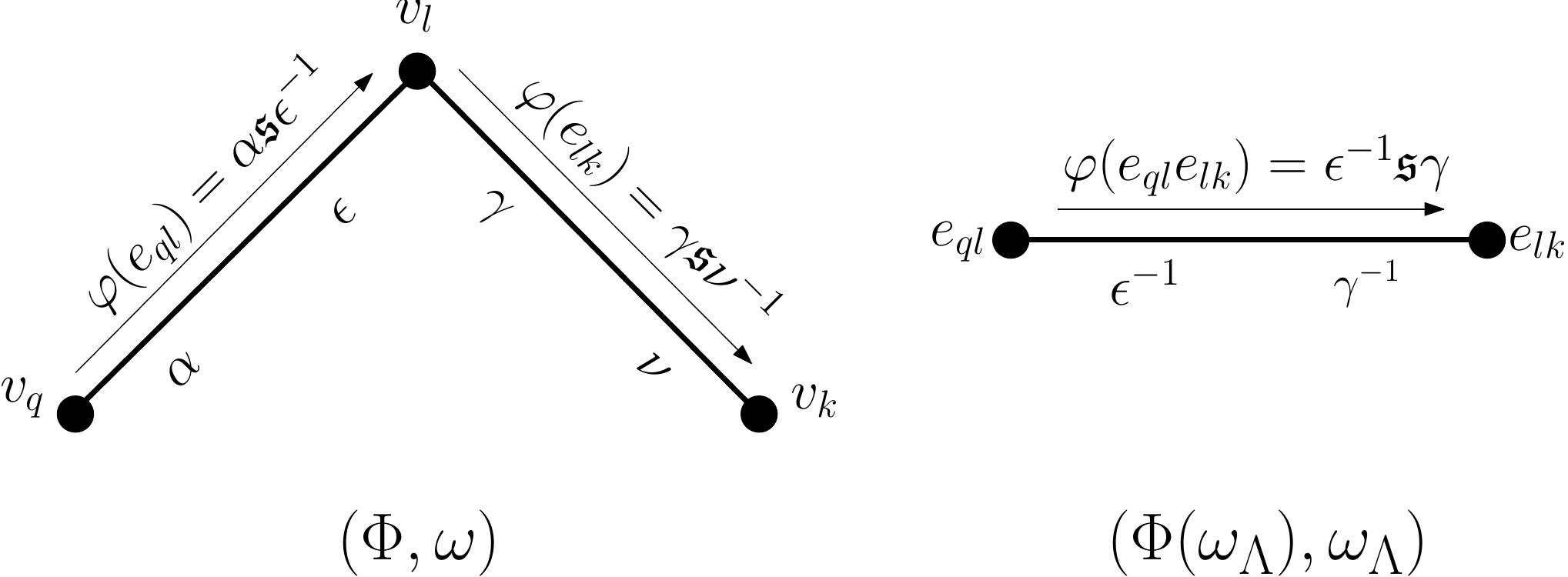}\centering
    \caption{An oriented $\mathfrak{G}^{\mathfrak{s}}$-gain graph $(\Phi,\omega)$ and its line graph $(\Phi(\omega_{\Lambda}),\omega_{\Lambda})$.}\label{OLinegraphExample}
\end{figure}

%If $\Phi$ only has gains of $+1$ or $-1$ and Im$(\tau)\subseteq\{-1,0,+1\}$, then definition generalizes the definition of the line graph of an oriented signed graph introduced by Zaslavsky \cite{MITTOSSG}.

The line graph of an oriented gain graph $(\Phi,\omega)$ can be viewed as a generalization of the line graph a graph in the following way.  Suppose $\omega=\mathfrak{s}$ for every incidence of $\Phi$, thus making the gain of every edge $\mathfrak{s}$.  By Equation \eqref{LGincidencerel}, $\omega_{\Lambda}=\mathfrak{s}^{-1}=\mathfrak{s}$ for every incidence of $\Phi(\omega_{\Lambda})$.  Hence, the gain of every edge in the line graph is also $\mathfrak{s}$.

\begin{prop} The line graph of $(\Phi(\mathfrak{s}),\mathfrak{s})$ is $(\Phi(\mathfrak{s}_{\Lambda}),\mathfrak{s}_{\Lambda})$.
\end{prop}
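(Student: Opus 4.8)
The plan is to unwind the definition of the line graph of an oriented gain graph and read off the resulting incidence phase function directly from Equation \eqref{LGincidencerel}, using only the defining property $\mathfrak{s}^2 = 1_{\mathfrak{G}}$ of the central involution. There is essentially no analytic content here: the statement is a bookkeeping identity, and the whole task is to confirm that the constant orientation $\mathfrak{s}$ is reproduced on the line graph.

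First I would record what the hypothesis means. Writing $(\Phi(\mathfrak{s}),\mathfrak{s})$ for the oriented $\mathfrak{G}^{\mathfrak{s}}$-gain graph whose orientation is the constant incidence phase function $\omega(v,e)=\mathfrak{s}$ for every incidence $(v,e)$, the definition of the line graph of $(\Phi,\omega)$ gives that the line graph of $(\Phi(\mathfrak{s}),\mathfrak{s})$ is the pair $(\Phi(\omega_{\Lambda}),\omega_{\Lambda})$, where $\omega_{\Lambda}$ is the $\mathfrak{G}^{\mathfrak{s}}$-incidence phase function on $\Lambda_{\Gamma}$ determined by Equation \eqref{LGincidencerel}.

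Next I would compute $\omega_{\Lambda}$ explicitly. For each incidence $(e_{ij},e_{ij}e_{jk})$ of the line graph, Equation \eqref{LGincidencerel} gives
\[ \omega_{\Lambda}(e_{ij},e_{ij}e_{jk}) = \omega(v_j,e_{ij})^{-1} = \mathfrak{s}^{-1} = \mathfrak{s}, \]
where the last equality is precisely the involution identity $\mathfrak{s}^2=1_{\mathfrak{G}}$. Thus $\omega_{\Lambda}$ is the constant incidence phase function equal to $\mathfrak{s}$ on $\Lambda_{\Gamma}$, which is exactly $\mathfrak{s}_{\Lambda}$. Consequently $\Phi(\omega_{\Lambda})=\Phi(\mathfrak{s}_{\Lambda})$, so the line graph of $(\Phi(\mathfrak{s}),\mathfrak{s})$ equals $(\Phi(\mathfrak{s}_{\Lambda}),\mathfrak{s}_{\Lambda})$, as claimed.

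The only point requiring any care---and the closest thing to an obstacle---is notational: one must be sure that $\mathfrak{s}_{\Lambda}$ denotes the constant orientation $\mathfrak{s}$ transported to the line graph $\Lambda_{\Gamma}$, so that the computed $\omega_{\Lambda}$ and the asserted $\mathfrak{s}_{\Lambda}$ genuinely coincide. Once that identification is made, the equality of the two oriented gain graphs is immediate, since both components (the gain graph $\Phi$ and the orientation) agree incidence-by-incidence.
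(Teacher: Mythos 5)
Your proof is correct and matches the paper's own argument, which appears in the paragraph immediately preceding the proposition: substitute the constant orientation $\omega=\mathfrak{s}$ into Equation \eqref{LGincidencerel}, use the involution identity $\mathfrak{s}^{-1}=\mathfrak{s}$ to conclude $\omega_{\Lambda}=\mathfrak{s}$ on every incidence of the line graph, and hence that every edge gain there is $\mathfrak{s}$ as well. No gaps; this is exactly the intended bookkeeping verification.
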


A quick calculation can verify that changing orientations of a particular edge in an oriented gain graph corresponds to switching the associated vertex in the line graph.  The following more general result says that switching equivalent oriented gain graphs produce switching equivalent line graphs.  This generalizes the same result known for signed graphs \cite[Lemma 6.1]{MananthavadyNotes} and borrows from its proof methods.  This is an essential ingredient to defining the line graph of a gain graph $\Phi$ in general.   
\begin{thm}\label{ReOrSwEqLG} Let $\mathfrak{G}^{\mathfrak{s}}$ be abelian.  Let $\Phi_1$ and $\Phi_2$ be $\mathfrak{G}^{\mathfrak{s}}$-gain graphs with the same underlying graph $\Gamma$.  If $(\Phi_1,\omega)$ and $(\Phi_2,\kappa)$ are oriented $\mathfrak{G}^{\mathfrak{s}}$-gain graphs where $\Phi_1\sim\Phi_2$, then $\Phi(\omega_{\Lambda})\sim\Phi(\kappa_{\Lambda})$.
\end{thm}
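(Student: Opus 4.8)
The plan is to exploit the switching–orientation compatibility $\Phi(\omega^{\zeta})=\Phi(\omega)^{\zeta}$ already established, together with a structural description of how two orientations of one fixed gain graph can differ. Since $\Phi_1\sim\Phi_2$, I fix a switching function $\zeta:V\to\mathfrak{G}^{\mathfrak{s}}$ with $\phi_2(e_{ij})=\zeta(v_i)^{-1}\phi_1(e_{ij})\zeta(v_j)$. I would then combine this with the defining relation \eqref{orientationREL} for the two orientations, namely $\phi_1(e_{ij})=\omega(v_i,e_{ij})\mathfrak{s}\,\omega(v_j,e_{ij})^{-1}$ and $\phi_2(e_{ij})=\kappa(v_i,e_{ij})\mathfrak{s}\,\kappa(v_j,e_{ij})^{-1}$. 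Cancelling the central involution $\mathfrak{s}$ and using commutativity yields, for each edge $e_{ij}$, the identity $\omega(v_i,e_{ij})\omega(v_j,e_{ij})^{-1}=\bigl(\zeta(v_i)\kappa(v_i,e_{ij})\bigr)\bigl(\zeta(v_j)\kappa(v_j,e_{ij})\bigr)^{-1}$.

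The key observation I would extract from this identity is a \emph{ratio lemma}: rearranging shows that the quantity $\rho(e_{ij}):=\omega(v_i,e_{ij})^{-1}\zeta(v_i)\kappa(v_i,e_{ij})$ takes the same value when computed at either endpoint of $e_{ij}$, so $\rho$ is a well-defined function on $E(\Gamma)$. Equivalently, $\zeta(v)\kappa(v,e)=\omega(v,e)\rho(e)$ at both incidences of every edge $e$. Intuitively, the discrepancy between the two orientations (after accounting for $\zeta$) is concentrated on each edge rather than on its ends, because the gain formula \eqref{orientationREL} records only the two-sided ratio of the incidence phases.

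Since the vertices of $\Lambda_{\Gamma}$ are exactly the edges of $\Gamma$, I would then define a switching function $\eta:V(\Lambda_{\Gamma})\to\mathfrak{G}^{\mathfrak{s}}$ on the line graph by $\eta(e):=\rho(e)^{-1}$ and verify $\Phi(\omega_{\Lambda})=\Phi(\kappa_{\Lambda})^{\eta}$. Concretely, substitute $\omega(v_j,e)=\zeta(v_j)\kappa(v_j,e)\rho(e)^{-1}$ into the line-graph incidence rule \eqref{LGincidencerel} and compute the line-graph gain $\phi_{\omega_{\Lambda}}(e_{ij}e_{jk})=\omega(v_j,e_{ij})^{-1}\mathfrak{s}\,\omega(v_j,e_{jk})$. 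Here both line-graph endpoints $e_{ij}$ and $e_{jk}$ meet at the common vertex $v_j$, so the two factors $\zeta(v_j)^{\pm1}$ cancel, leaving $\phi_{\omega_{\Lambda}}(e_{ij}e_{jk})=\rho(e_{ij})\,\phi_{\kappa_{\Lambda}}(e_{ij}e_{jk})\,\rho(e_{jk})^{-1}=\eta(e_{ij})^{-1}\phi_{\kappa_{\Lambda}}(e_{ij}e_{jk})\eta(e_{jk})$. This is exactly the switched gain function $\phi_{\kappa_{\Lambda}}^{\eta}$, so $\Phi(\omega_{\Lambda})=\Phi(\kappa_{\Lambda})^{\eta}\sim\Phi(\kappa_{\Lambda})$, as desired.

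I expect the main obstacle to be the bookkeeping at the shared vertex: one must see that the per-vertex switching factor $\zeta(v_j)$ attached to the common endpoint drops out in the line-graph gain, while the per-edge factor $\rho$ survives and reorganizes itself precisely as a switching function on the line-graph vertices. Both cancellations rely essentially on $\mathfrak{G}^{\mathfrak{s}}$ being abelian and on $\mathfrak{s}$ being central; the abelian hypothesis is what guarantees that $\rho$ is endpoint-independent and that the factors commute past $\mathfrak{s}$. Everything else is routine substitution into \eqref{orientationREL} and \eqref{LGincidencerel}.
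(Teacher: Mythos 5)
Your proof is correct, but it takes a genuinely different route from the paper. The paper argues indirectly: it computes the gain of an arbitrary cycle $C$ in $\Lambda_{\Gamma}$ under $\omega_{\Lambda}$, reduces consecutive repeated vertices to obtain a closed walk $W_{\Gamma}$ in $\Gamma$, and shows $\phi_{\omega_{\Lambda}}(\vec{C}_{e_0})=\mathfrak{s}^{l-k}\phi_1(W_{\Gamma})$ and likewise $\phi_{\kappa_{\Lambda}}(\vec{C}_{e_0})=\mathfrak{s}^{l-k}\phi_2(W_{\Gamma})$; since switching preserves closed-walk gains in an abelian group (Proposition \ref{conjugateWalk}), all cycle gains agree, and Lemma \ref{SwitchingFExistence} (itself a spanning-tree construction) then yields the existence of a switching function. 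You instead build the switching function explicitly: your ``ratio lemma'' showing that $\rho(e)=\omega(v,e)^{-1}\zeta(v)\kappa(v,e)$ is endpoint-independent is the crux, and I verified it follows from equating the two instances of Equation \eqref{orientationREL} through $\zeta$ and rearranging in the abelian group; the subsequent cancellation of $\zeta(v_j)$ at the shared vertex and the identity $\phi_{\omega_{\Lambda}}(e_{ij}e_{jk})=\eta(e_{ij})^{-1}\phi_{\kappa_{\Lambda}}(e_{ij}e_{jk})\eta(e_{jk})$ with $\eta=\rho^{-1}$ are also correct. Your approach buys constructivity and economy: it produces the line-graph switching function $\eta$ in closed form, needs neither Lemma \ref{SwitchingFExistence} nor any cycle analysis, and makes transparent exactly where commutativity enters (endpoint-independence of $\rho$). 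The paper's approach, while longer, yields as a byproduct the structural formula relating line-graph cycle gains to base-graph closed-walk gains (with the $\mathfrak{s}^{l-k}$ correction from vertex triangles), which has independent interest and motivates the questions posed later in the paper. Both arguments use the abelian hypothesis essentially, and neither extends as written to nonabelian $\mathfrak{G}^{\mathfrak{s}}$ (the paper's Question 2).
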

\begin{proof}  %We will show that the cycle gains are the same and then the result follows from Lemma \ref{SwitchingFExistence}.

Let $(\Phi(\omega_{\Lambda}),\omega_{\Lambda})$ and $(\Phi(\kappa_{\Lambda}),\kappa_{\Lambda})$ be the line graphs of $(\Phi_1,\omega)$ and $(\Phi_2,\kappa)$, respectively.  Both line graphs have the same underlying graph $\Lambda_{\Gamma}$.

Let $C$ be a cycle in $\Lambda_{\Gamma}$.  Assume that $C=e_0e_1\cdots e_{l-1}e_0$, where edges $e_{i-1}$ and $e_{i}$ have a common vertex $v_i$ in $\Gamma$ for every $i\in\{1,\ldots,l-1\}$, and edges $e_{l-1}$ and $e_0$ have a common vertex $v_l$ in $\Gamma$. 

Now we calculate the gain of $\vec{C}_{e_0}$ in $(\Phi(\omega_{\Lambda}),\omega_{\Lambda})$ as follows:
\begin{align*}
\phi_{\omega_{\Lambda}}(\vec{C}_{e_0})&=\phi_{\omega_{\Lambda}}(e_0e_1)\phi_{\omega_{\Lambda}}(e_1e_2)\cdots\phi_{\omega_{\Lambda}}(e_{l-1}e_0)\\
&=(\omega_{\Lambda}(e_0,e_0e_1)\cdot\mathfrak{s}\cdot\omega_{\Lambda}(e_1,e_0e_1)^{-1})(\omega_{\Lambda}(e_1,e_1e_2)\cdot\mathfrak{s}\cdot\omega_{\Lambda}(e_2,e_1e_2)^{-1})\cdots\\
&\qquad\qquad(\omega_{\Lambda}(e_{l-1},e_{l-1}e_0)\cdot\mathfrak{s}\cdot\omega_{\Lambda}(e_0,e_{l-1}e_0)^{-1})\\
&=\mathfrak{s}^{l}\omega(v_1,e_0)^{-1}\omega(v_1,e_1)\omega(v_2,e_1)^{-1}\omega(v_2,e_2)\cdots\omega(v_l,e_{l-1})^{-1}\omega(v_l,e_0).
\end{align*}
Notice that the resulting product only involves the orientation $\omega$, and it appears that after rearranging the factors this product may simplify to the gain of a closed walk in $\Gamma$.  However, since $v_i$ may be the same as $v_{i+1}$, the product might not be the gain of a closed walk in $\Gamma$.  If $v_i=v_{i+1}$ for some $i$, then $\omega(v_i,e_i)\omega(v_{i+1},e_i)^{-1}=1_{\mathfrak{G}^{\mathfrak{s}}}$, so we can reduce the product above.  Notice that if $v_i=v_{i+1}$, then $v_i$ is incident to $e_{i-1}$, $e_{i}$ and $e_{i+1}$ in $\Gamma$; thus the vertices $e_{i-1}$, $e_{i}$ and $e_{i+1}$ in the line graph $\Lambda_{\Gamma}$ form a triangle.  Therefore, $C'=e_0e_1\cdots e_{i-1}e_{i+1}\ldots e_{l-1}e_0$ is a cycle in $(\Phi(\omega_{\Lambda}),\omega_{\Lambda})$ and $\phi_{\omega_{\Lambda}}(\vec{C'}_{e_0})=\mathfrak{s}\cdot\phi_{\omega_{\Lambda}}(\vec{C}_{e_0})$.

Let $C''=f_0 f_1 \cdots f_{k-1}f_0$ be the cycle in $(\Phi(\omega_{\Lambda}),\omega_{\Lambda})$ obtained from $C$ by reducing all consecutive equal vertices.  Suppose that for all $i\in\{1,\ldots,k-1\}$ the edges $f_{i-1}$ and $f_{i}$ have a common vertex $w_i$ in $\Gamma$, and edges $f_{k-1}$ and $f_0$ have common vertex $w_k$ in $\Gamma$.  Now notice that $W_{\Gamma}=w_1 f_1 w_2 \cdots f_{k-1} w_k f_0 w_1$ is a closed walk of length $k$ in $\Gamma$.  Therefore,
\begin{align*}
\phi_{\omega_{\Lambda}}(\vec{C''}_{f_0})&=\phi_{\omega_{\Lambda}}(f_0f_1)\phi_{\omega_{\Lambda}}(f_1f_2)\cdots\phi_{\omega_{\Lambda}}(f_{k-1}f_0)\\
&=(\omega_{\Lambda}(f_0,f_0f_1)\cdot\mathfrak{s}\cdot\omega_{\Lambda}(f_1,f_0f_1)^{-1})(\omega_{\Lambda}(f_1,f_1f_2)\cdot\mathfrak{s}\cdot\omega_{\Lambda}(f_2,f_1f_2)^{-1})\cdots\\
& \qquad\qquad(\omega_{\Lambda}(f_{k-1},f_{k-1}f_0)\cdot\mathfrak{s}\cdot\omega_{\Lambda}(f_0,f_{k-1}f_0)^{-1})\\
&=\omega(w_1,f_0)^{-1}\cdot\mathfrak{s}\cdot\omega(w_1,f_1)\cdot\omega(w_2,f_1)^{-1}\cdot\mathfrak{s}\cdot\omega(w_2,f_2)\cdots\\
& \qquad\qquad\omega(w_k,f_{k-1})^{-1}\cdot\mathfrak{s}\cdot\omega(w_k,f_0)\\
&=\omega(w_1,f_0)^{-1}\cdot\mathfrak{s}\cdot\omega(w_1,f_1)\cdot\mathfrak{s}\cdot\omega(w_2,f_1)^{-1}\omega(w_2,f_2)\cdot\mathfrak{s}\cdots\\
& \qquad\qquad\mathfrak{s}\cdot\omega(w_k,f_{k-1})^{-1}\omega(w_k,f_0)\\
&=\omega(w_1,f_0)^{-1}\cdot\mathfrak{s}\cdot\phi_1(f_1)\phi_1(f_2)\cdots\phi_1(f_{k-1})\cdot\omega(w_k,f_0)\\
&=\phi_1(f_1)\phi_1(f_2)\cdots\phi_1(f_{k-1})\omega(w_k,f_0)\cdot\mathfrak{s}\cdot\omega(w_1,f_0)^{-1}\\
&=\phi_1(f_1)\phi_1(f_2)\cdots\phi_1(f_{k-1})\phi_1(f_0)\\
&=\phi_1(W_{\Gamma}).
\end{align*}
Thus, $\phi_{\omega_{\Lambda}}(\vec{C}_{e_0})=\mathfrak{s}^{l-k}\phi_1(W_{\Gamma})$.  Similarly, we can calculate $\phi_{\kappa_{\Lambda}}(\vec{C}_{e_0})=\mathfrak{s}^{l-k}\phi_2(W_{\Gamma})$.

Since $\mathfrak{G}^{\mathfrak{s}}$ is abelian, Proposition \ref{conjugateWalk} says that switching a $\mathfrak{G}^{\mathfrak{s}}$-gain graph does not change the gain of a closed walk.  Hence, our assumption of $\Phi_1\sim \Phi_2$ implies $\phi_1(W_{\Gamma})=\phi_2(W_{\Gamma})$.  Therefore, $\phi_{\omega_{\Lambda}}(\vec{C}_{e_0})=\mathfrak{s}^{l-k}\phi_1(W_{\Gamma}) = \mathfrak{s}^{l-k}\phi_2(W_{\Gamma})= \phi_{\kappa_{\Lambda}}(\vec{C}_{e_0})$.  Since $C$ was arbitrary, the proof is complete by Lemma \ref{SwitchingFExistence}.
\end{proof}

An arbitrary $\mathfrak{G}^{\mathfrak{s}}$-gain graph $\Phi$ has many possible orientations $\omega$.  By selecting one particular orientation, we can then produce a single line graph as above.  However, the gains in the line graph depend on the original chosen orientation.  Therefore, the line graph of a $\mathfrak{G}^{\mathfrak{s}}$-gain graph cannot be a single gain graph.  Theorem \ref{ReOrSwEqLG} allows us to instead define the line graph of $\Phi$ as switching class.  If $\omega$ is an arbitrary orientation of $\Phi$, then the \emph{line graph of $\Phi$}, written $\Lambda(\Phi)$, is the switching class $[\Phi(\omega_{\Lambda})]$.  See Figure \ref{LinegraphExample} for an example.  We can even define the line graph of a switching class $[\Phi]$ as $[\Phi(\omega_{\Lambda})]$; that is, $\Lambda([\Phi])=[\Phi(\omega_{\Lambda})]$.  If $\mathfrak{G}=\{+1,-1\}$ with $\mathfrak{s}=-1$, then these definitions generalize line graph of a signed graph \cite{MananthavadyNotes}.

%Reorienting an edge switches the corresponding vertex in the line graph, and hence, the line graph of a $\mathbb{T}$-gain graph is defined as a switching class, rather than a single $\mathbb{T}$-gain graph.

%The \emph{line graph of $\Phi$}, written $\Lambda(\Phi)$, is a switching class of all possible $(\Lambda_{\Gamma},\tau_{\Lambda})$, such that $\tau$ is an arbitrary orientation function of $\Phi$.  That is, $\Lambda(\Phi)=[\Lambda_{\Gamma},\tau_{\Lambda}]$.  See Figure \ref{LinegraphExample} for an example.

\begin{figure}[h!]
    \includegraphics[scale=0.6]{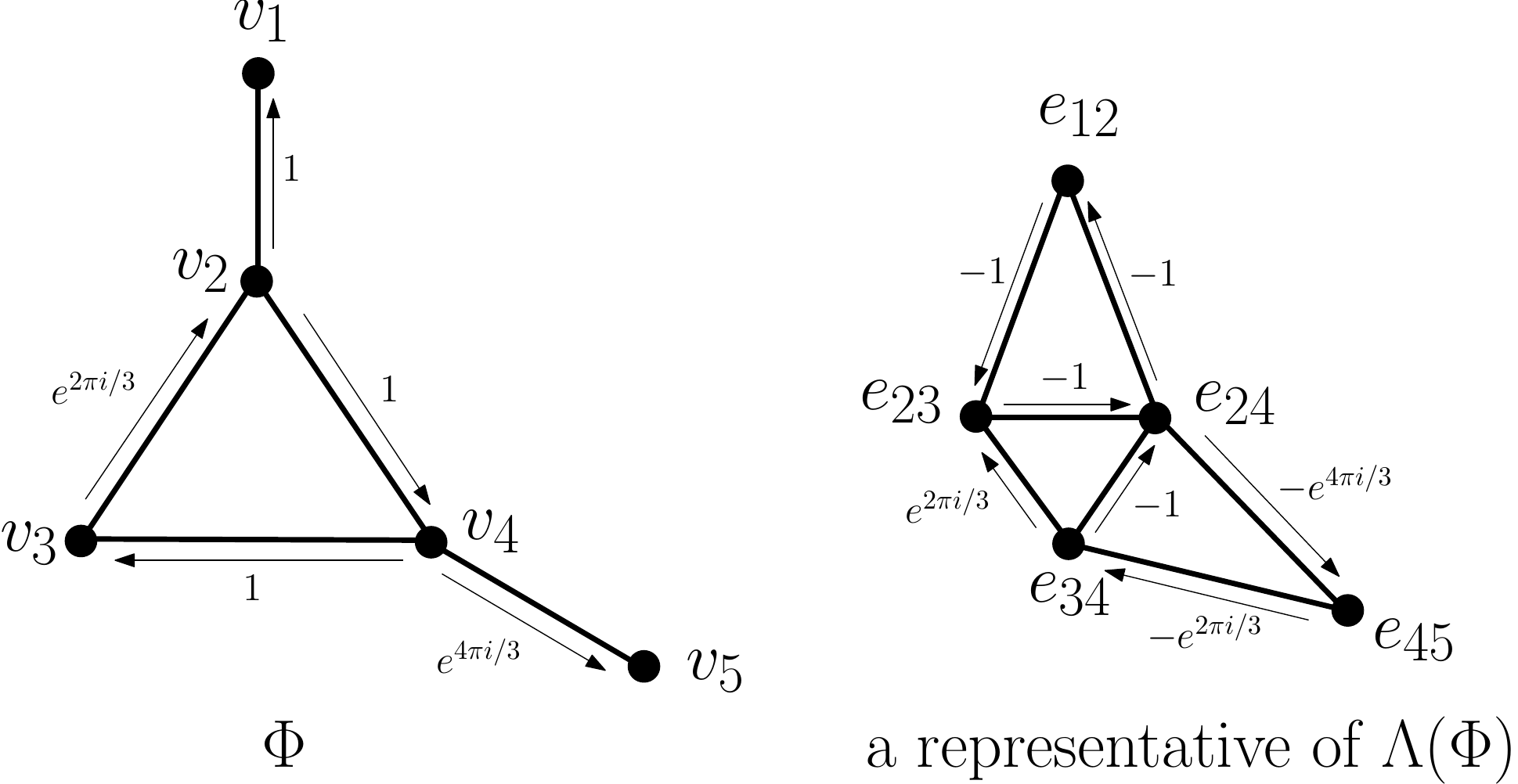}\centering
    \caption{A $\mathbb{T}^{-1}$-gain graph $\Phi$, and a representative of $\Lambda(\Phi)$.}\label{LinegraphExample}
\end{figure}

\noindent {\bf Question 2:} Is there an analogue of Theorem \ref{ReOrSwEqLG} for nonabelian $\mathfrak{G}^{\mathfrak{s}}$?
%
%%%%%%%%%%%%%%%%%%%%%%%%%%%%%%%%%%%%%%%%%%%%%%%%%%%%%%%%
\section{Matrices}\label{SECTIONMatricesofORIENTEDGAINGRAPHS}
%%%%%%%%%%%%%%%%%%%%%%%%%%%%%%%%%%%%%%%%%%%%%%%%%%%%%%%%

Now we study several matrices associated to the various gain graphic structures defined above.  Several of these matrices are new and generalize some previously known concepts for graphs, signed graphs and $\mathbb{T}$-gain graphs.

%\subsection{$\mathfrak{G}^{\pm}$-matrices}
Let $\Phi$ be a $\mathfrak{G}^{\mathfrak{s}}$-gain graph.  The \emph{adjacency matrix} $A(\Phi)=(a_{ij})$ is an $n\times n$ matrix with entries in $\mathfrak{G}^{\mathfrak{s}}\cup\{0\}$, and is is defined by
\begin{equation*}
a_{ij}=
\begin{cases} \phi(e_{ij}) & \text{if }v_i\text{ is adjacent to }v_j,
\\
0 &\text{otherwise.}
\end{cases}
\end{equation*}

Let $\Phi$ be an $\mathfrak{G}^{\mathfrak{s}}$-gain graph.  The \emph{incidence matrix} $\Eta(\Phi)=(\eta_{ve})$ is an  $n\times m$ matrix with entries in $\mathfrak{G}^{\mathfrak{s}}\cup\{0\}$, defined by
\begin{equation*}
\eta_{v_i e}=
\begin{cases} \eta_{v_j e}\cdot\mathfrak{s}\cdot\phi(e_{ij}) &\text{if } e=e_{ij} \in E,
\\
0 &\text{otherwise;}
\end{cases}
\end{equation*}
furthermore, $\eta_{v_i e}\in \mathfrak{G}^{\mathfrak{s}}$ if $e_{ij}\in E$.  We say ``an" incidence matrix, because with this definition $\Eta(\Phi)$ is not unique.  Each column can be left multiplied by any element in $\mathfrak{G}^{\mathfrak{s}}$ and the result can still be called an incidence matrix.  For example, we can choose $\eta_{v_je}=1_{\mathfrak{G^{\mathfrak{s}}}}$ so $\eta_{v_i e}=\mathfrak{s}\cdot\phi(e_{ij})$ for each $e=e_{ij}\in E$.

Providing a $\mathfrak{G}^{\mathfrak{s}}$-gain graph $\Phi$ with an orientation results in a well defined incidence matrix for the oriented $\mathfrak{G}^{\mathfrak{s}}$-gain graph $(\Phi,\omega)$.  The \emph{incidence matrix} $\Eta(\Phi,\omega)=(\eta_{ve})$ is an $n\times m$ matrix with entries in $\mathfrak{G}^{\mathfrak{s}}\cup\{0\}$, defined by
\begin{equation}\label{OCUGGincidencerel}
\eta_{v e}=\omega(v,e) \text{ for every } (v,e) \in V\times E.
\end{equation}
The adjacency matrix of $\Lambda(\Phi)$ is not well defined since the gain of a specific oriented edge is unknown.  However, the adjacency matrix of $(\Phi(\omega_{\Lambda}),\omega_{\Lambda})$ is well defined because the gain of every oriented edge is determined by $\omega_{\Lambda}$. 

%=================================================
\subsection{Complex Unit Gain Graphs}
%=================================================
%=================================================
We can now state the following generalization of a signed graphic result due to Zaslavsky \cite{MITTOSSG} to the setting of complex unit gain graphs.  This also generalizes the well known relationship between the oriented incidence matrix of a graph and the adjacency matrix of the corresponding line graph. 

\begin{thm}\label{LineGraphHAEq} Let $(\Phi,\omega)$ be an oriented $\mathbb{T}^{\mathfrak{s}}$-gain graph, where $\mathfrak{s}$ is fixed as either $+1$ or $-1$.  Then
\begin{equation}
\Eta(\Phi,\omega)^*\Eta(\Phi,\omega)=2I+\mathfrak{s}A(\Phi(\omega_{\Lambda}),\omega_{\Lambda}).
\end{equation}
\end{thm}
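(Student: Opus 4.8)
The plan is to verify the identity entry by entry, with both sides regarded as $m\times m$ matrices indexed by the edges of $\Gamma$ (equivalently, the vertices of $\Lambda_{\Gamma}$). Expanding the Gram-type product gives, for edges $e,f\in E$,
\[
\big(\Eta(\Phi,\omega)^*\Eta(\Phi,\omega)\big)_{ef}=\sum_{v\in V}\overline{\omega(v,e)}\,\omega(v,f).
\]
Because every nonzero entry $\omega(v,e)$ lies on the circle group $\mathbb{T}$, I would replace $\overline{\omega(v,e)}$ by $\omega(v,e)^{-1}$ and observe that a summand is nonzero only when $v$ is incident to both $e$ and $f$.

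First, for the diagonal ($e=f$): since $\Gamma$ is simple, exactly two vertices are incident to $e$, and on each the summand is $|\omega(v,e)|^2=1$; hence the diagonal entry equals $2$, which supplies the $2I$ term. Next, for $e\neq f$ I would split into two cases. If $e$ and $f$ have no common vertex, the sum is empty and the entry is $0$, matching $A(\Phi(\omega_{\Lambda}),\omega_{\Lambda})_{ef}=0$ (nonadjacency in $\Lambda_{\Gamma}$). If $e$ and $f$ meet in a vertex $v_j$, which by simplicity is unique, write $e=e_{ij}$ and $f=e_{jk}$; then the sum collapses to the single term $\omega(v_j,e_{ij})^{-1}\omega(v_j,e_{jk})$.

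It then remains to match this against $\mathfrak{s}$ times the line graph adjacency entry. Applying the orientation relation \eqref{orientationREL} to the gain graph $\Phi(\omega_{\Lambda})$ and then the line graph incidence formula \eqref{LGincidencerel} at both endpoints of the line graph edge $e_{ij}e_{jk}$ gives
\[
\phi_{\omega_{\Lambda}}(e_{ij}e_{jk})=\omega_{\Lambda}(e_{ij},e_{ij}e_{jk})\cdot\mathfrak{s}\cdot\omega_{\Lambda}(e_{jk},e_{ij}e_{jk})^{-1}=\omega(v_j,e_{ij})^{-1}\cdot\mathfrak{s}\cdot\omega(v_j,e_{jk}).
\]
Multiplying by the scalar $\mathfrak{s}$ and using that $\mathfrak{s}\in\{+1,-1\}$ is a central involution (so it commutes with every factor and $\mathfrak{s}^2=1$) collapses the two copies of $\mathfrak{s}$ and returns exactly $\omega(v_j,e_{ij})^{-1}\omega(v_j,e_{jk})$, agreeing with the off-diagonal entry found above. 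Assembling the three cases yields the claimed equation.

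The verification is largely bookkeeping; the one place demanding care is tracking the two occurrences of $\mathfrak{s}$ — one entering through the gain in the adjacency matrix, the other as the explicit factor multiplying $A$ — whose cancellation depends on centrality of $\mathfrak{s}$ and on $\mathfrak{s}^2=1_{\mathbb{T}}$. I expect the subtlest point to be confirming that \eqref{LGincidencerel} applies symmetrically at the endpoint $e_{jk}$, giving $\omega_{\Lambda}(e_{jk},e_{ij}e_{jk})=\omega(v_j,e_{jk})^{-1}$; the remaining reductions follow immediately from $\overline{z}=z^{-1}$ on $\mathbb{T}$ and the simplicity of $\Gamma$.
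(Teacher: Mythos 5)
Your proposal is correct and follows essentially the same route as the paper: an entry-by-entry computation of the Gram matrix, splitting into the diagonal case (yielding $2I$) and the adjacent-edge case, where the line graph incidence relation \eqref{LGincidencerel} and the orientation relation \eqref{orientationREL} convert $\omega(v_j,e)^{-1}\omega(v_j,f)$ into $\mathfrak{s}\,\phi_{\omega_{\Lambda}}(ef)$. The only (harmless) difference is that you explicitly record the non-adjacent case as giving a zero entry, which the paper leaves implicit.
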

\begin{proof}
Notice that $\Eta(\Phi,\omega)^*\Eta(\Phi,\omega)$ is an $E\times E$ matrix.  Consider the dot product of row $\mathbf{r}_i$ of $\Eta(\Phi,\omega)^*$ with column $\mathbf{c}_j$ of $\Eta(\Phi,\omega)$. Suppose column $\mathbf{c}_i$ corresponds to edge $e_{qr}$ and $\mathbf{c}_j$ corresponds to edge $e_{lk}$.  Figure \ref{OLinegraphExample} is a helpful reference for the following calculation.

Case 1: $i=j$ (same edge).  Then $\mathbf{r}_i=\mathbf{c}_j^*$ and thus,
\begin{align*}
\mathbf{r}_i\cdot \mathbf{c}_j=\mathbf{c}_j^*\cdot \mathbf{c}_j&=\bar{\eta}_{v_ke_{lk}}\eta_{v_ke_{lk}}+ \bar{\eta}_{v_le_{lk}}\eta_{v_le_{lk}}\\ 
&=\overline{\omega(v_k,e_{lk})}\omega(v_k,e_{lk})+ \overline{\omega(v_l,e_{lk})}\omega(v_l,e_{lk})\\
&=|\omega(v_k,e_{lk})|^2+ |\omega(v_l,e_{lk})|^2\\
&=2.
\end{align*}

Case 2: $i\neq j$ and $r=l$ (distinct adjacent edges).
\begin{align*}
%\mathbf{r}_i\cdot \mathbf{c}_j=\mathbf{c}_i^*\cdot \mathbf{c}_j&=0\cdot \eta_{v_ke_{lk}}+ \bar{\eta}_{v_l e_{lq}}\eta_{v_le_{lk}} + \bar{\eta}_{v_q e_{ql}}\cdot 0 \\
\mathbf{r}_i\cdot \mathbf{c}_j=\mathbf{c}_i^*\cdot \mathbf{c}_j&= \bar{\eta}_{v_l e_{ql}}\eta_{v_le_{lk}}\\
&= \omega(v_l,e_{ql})^{-1} \omega(v_l,e_{lk})\\ %\qquad (\text{by } \eqref{OCUGGincidencerel})\\
&= \omega_{\Lambda}(e_{ql},e_{ql}e_{lk})\omega_{\Lambda}(e_{lk},e_{ql}e_{lk})^{-1}\\% \qquad (\text{by } \eqref{LGincidencerel})\\
&=\mathfrak{s}\phi_{\Lambda}(e_{ql}e_{lk}).
\end{align*}  
Therefore, $\Eta(\Phi,\omega)^*\Eta(\Phi,\omega)=2I+\mathfrak{s}A(\Phi(\omega_{\Lambda}),\omega_{\Lambda})$.
\end{proof}

\begin{lem}[\cite{MR2900705}, Lemma 4.1]\label{simASwitch} Let $\Phi_1=(\Gamma,\phi_1)$ and $\Phi_2=(\Gamma,\phi_2)$ both be $\mathbb{T}$-gain graphs. If $\Phi_1\sim \Phi_2$, then $A(\Phi_1)$ and $A(\Phi_2)$ have the same spectrum.
\end{lem}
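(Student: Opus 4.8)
The plan is to realize gain switching as conjugation by a diagonal unitary matrix, so that $A(\Phi_1)$ and $A(\Phi_2)$ come out similar. By hypothesis $\Phi_1\sim\Phi_2$, so there is a switching function $\zeta:V\to\mathbb{T}$ with $\phi_2(e_{ij})=\zeta(v_i)^{-1}\phi_1(e_{ij})\zeta(v_j)$ for every oriented edge. The crucial feature of the group $\mathbb{T}$ is that every value $\zeta(v_i)$ has modulus one; hence the diagonal matrix $D=\operatorname{diag}(\zeta(v_1),\ldots,\zeta(v_n))$ is unitary, with $D^{-1}=D^*$.

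First I would verify the matrix identity $A(\Phi_2)=D^{-1}A(\Phi_1)D$ entrywise. Because $D$ is diagonal, the $(i,j)$ entry of $D^{-1}A(\Phi_1)D$ is simply $\zeta(v_i)^{-1}a_{ij}\zeta(v_j)$, where $a_{ij}$ denotes the $(i,j)$ entry of $A(\Phi_1)$. If $v_i$ is adjacent to $v_j$ this equals $\zeta(v_i)^{-1}\phi_1(e_{ij})\zeta(v_j)=\phi_2(e_{ij})$ by the switching relation; otherwise both the corresponding entry of $A(\Phi_2)$ and the product above are $0$. This confirms $D^{-1}A(\Phi_1)D=A(\Phi_2)$.

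Having exhibited $A(\Phi_1)$ and $A(\Phi_2)$ as similar matrices, I would conclude that they share the same characteristic polynomial, and hence the same spectrum. I expect no real obstacle here: the only point needing care is the entrywise check, together with the observation that $\zeta$ taking values in $\mathbb{T}$ is exactly what makes $D$ unitary. This last fact upgrades the similarity to a unitary equivalence $A(\Phi_2)=D^*A(\Phi_1)D$, which is consistent with both adjacency matrices being Hermitian, since $\phi(e_{ji})=\phi(e_{ij})^{-1}=\overline{\phi(e_{ij})}$ forces $a_{ji}=\overline{a_{ij}}$; in particular the common spectrum is real.
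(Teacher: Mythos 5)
Your proof is correct. The paper does not actually prove this lemma---it imports it from \cite{MR2900705} (Lemma 4.1)---but the argument there is precisely your diagonal-unitary conjugation: switching by $\zeta$ gives $A(\Phi^{\zeta})=D^{*}A(\Phi)D$ with $D=\operatorname{diag}(\zeta(v_1),\ldots,\zeta(v_n))$ unitary, so similar matrices share a spectrum; your proposal matches the standard approach, including the correct use of the defining relation $\phi_2(e_{ij})=\zeta(v_i)^{-1}\phi_1(e_{ij})\zeta(v_j)$ and the observation that $\mathbb{T}$-valued $\zeta$ is what makes $D$ unitary.
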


Even though $A(\Lambda(\Phi))$ is only well defined up to switching, its eigenvalues are well defined by Lemma \ref{simASwitch}.

\begin{cor}\label{LGAdjSpec}
Let $(\Phi,\omega)$ be an oriented $\mathbb{T}^{\mathfrak{s}}$-gain graph, where $\mathfrak{s}$ is fixed as either $+1$ or $-1$. Then $A(\Lambda(\Phi))$ and $A(\Phi(\omega_{\Lambda}),\omega_{\Lambda})$ have the same spectrum.
\end{cor}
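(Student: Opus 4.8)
The plan is to recognize that this statement is essentially a bookkeeping consequence of how the line graph of a $\mathbb{T}^{\mathfrak{s}}$-gain graph was defined, combined with the switching-invariance of the adjacency spectrum recorded in Lemma \ref{simASwitch}. First I would recall that, by definition, the line graph $\Lambda(\Phi)$ is the \emph{switching class} $[\Phi(\omega_{\Lambda})]$ determined by the chosen orientation $\omega$; in particular $\Phi(\omega_{\Lambda})$ is one concrete representative of this class. Since the adjacency matrix of a gain graph depends only on its gains, and the gains of $\Phi(\omega_{\Lambda})$ are exactly those forced by $\omega_{\Lambda}$ through Equation \eqref{orientationREL}, the matrix $A(\Phi(\omega_{\Lambda}),\omega_{\Lambda})$ is literally the adjacency matrix of that representative. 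Note that Theorem \ref{LineGraphHAEq} is not actually needed for this corollary; it follows from the definition of $\Lambda(\Phi)$ together with Lemma \ref{simASwitch}.

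Next I would verify the hypotheses needed to invoke Lemma \ref{simASwitch}, namely that $\Phi(\omega_{\Lambda})$ is a genuine $\mathbb{T}$-gain graph. Because $\mathfrak{s}\in\{+1,-1\}\subseteq\mathbb{T}$ and $\omega_{\Lambda}$ takes values in $\mathbb{T}$, each gain $\omega_{\Lambda}(e_{ij},e_{ij}e_{jk})\cdot\mathfrak{s}\cdot\omega_{\Lambda}(e_{jk},e_{ij}e_{jk})^{-1}$ is a product of unit-modulus complex numbers and hence again lies in $\mathbb{T}$; so $A(\Phi(\omega_{\Lambda}))$ has entries in $\mathbb{T}\cup\{0\}$ and Lemma \ref{simASwitch} is applicable.

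Finally I would combine these facts. Any other representative of the switching class $\Lambda(\Phi)$ is, by Theorem \ref{ReOrSwEqLG}, switching equivalent to $\Phi(\omega_{\Lambda})$, so by Lemma \ref{simASwitch} all representatives share a common adjacency spectrum; this is precisely what makes the spectrum of $A(\Lambda(\Phi))$ well defined, as noted just before the statement. Since $\Phi(\omega_{\Lambda})$ is itself such a representative, the well-defined spectrum of $A(\Lambda(\Phi))$ coincides with the spectrum of $A(\Phi(\omega_{\Lambda}),\omega_{\Lambda})$, which is the claim. I do not expect a genuine obstacle here: the only point demanding care is the one flagged above, confirming that $\Phi(\omega_{\Lambda})$ really is a $\mathbb{T}$-gain graph so that Lemma \ref{simASwitch}, stated only for $\mathbb{T}$-gain graphs, applies, after which the result is immediate from the definition of $\Lambda(\Phi)$ as a switching class.
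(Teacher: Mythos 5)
Your proof is correct and is essentially the paper's own (implicit) argument: the paper states this corollary without proof, relying exactly on the observation that $\Lambda(\Phi)$ is by definition the switching class $[\Phi(\omega_{\Lambda})]$, so Lemma \ref{simASwitch} makes the spectrum of any representative agree with that of $A(\Phi(\omega_{\Lambda}),\omega_{\Lambda})$. Your added remarks --- that Theorem \ref{LineGraphHAEq} is not needed and that the gains of $\Phi(\omega_{\Lambda})$ do lie in $\mathbb{T}$ --- are accurate refinements of the same route, not a different one.
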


%If $(\Phi,\tau)$ is an oriented $\mathbb{T}$-gain graph, then 
%If $v_i$ is adjacent to $v_j$, then $a_{ij}=\phi(e_{ij})=\phi(e_{ji})^{-1}=\overline{\phi(e_{ji})}=\bar{a}_{ji}$.  Therefore, $A(\Phi)$ is Hermitian and its eigenvalues are real.  

For a graph, there is a classic bound on the eigenvalues which says that all adjacency eigenvalues of the line graph are greater than or equal to $-2$.  For a signed graph, a similar statement can be made, which says that all adjacency eigenvalues of the line graph of a signed graph are less than or equal to 2 \cite{MITTOSSG}.  A generalization for $\mathbb{T}^{-1}$-gain graphs can be stated, establishing an upper bound on the eigenvalues of $A(\Lambda(\Phi))$.
\begin{thm}\label{LHEignM1} Let $\Phi=(\Gamma,\phi)$ be a $\mathbb{T}^{-1}$-gain graph.  If $\lambda$ is an eigenvalue of $A(\Lambda(\Phi))$, then $\lambda \leq 2$.
\end{thm}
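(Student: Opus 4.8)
The plan is to combine the incidence--adjacency identity of Theorem \ref{LineGraphHAEq} with the elementary fact that a Gram matrix is positive semidefinite. First I would fix an arbitrary orientation $\omega$ of $\Phi$ and record that for a $\mathbb{T}^{-1}$-gain graph the distinguished central involution is $\mathfrak{s}=-1$. With this specialization Theorem \ref{LineGraphHAEq} reads
\[
\Eta(\Phi,\omega)^*\Eta(\Phi,\omega)=2I-A(\Phi(\omega_{\Lambda}),\omega_{\Lambda}).
\]

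The key observation is that the left-hand side is a Gram matrix, hence Hermitian and positive semidefinite, so all of its eigenvalues are nonnegative real numbers. Since $A(\Phi(\omega_{\Lambda}),\omega_{\Lambda})$ is the adjacency matrix of a $\mathbb{T}$-gain graph it is Hermitian with real spectrum, and the displayed identity shows that $\mu$ is an eigenvalue of $A(\Phi(\omega_{\Lambda}),\omega_{\Lambda})$ exactly when $2-\mu$ is an eigenvalue of the positive semidefinite matrix $\Eta(\Phi,\omega)^*\Eta(\Phi,\omega)$. Consequently $2-\mu\geq 0$, that is $\mu\leq 2$, for every eigenvalue $\mu$ of $A(\Phi(\omega_{\Lambda}),\omega_{\Lambda})$.

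Finally I would invoke Corollary \ref{LGAdjSpec}, which guarantees that $A(\Lambda(\Phi))$ and $A(\Phi(\omega_{\Lambda}),\omega_{\Lambda})$ share the same spectrum; hence any eigenvalue $\lambda$ of $A(\Lambda(\Phi))$ is an eigenvalue of $A(\Phi(\omega_{\Lambda}),\omega_{\Lambda})$ and therefore satisfies $\lambda\leq 2$. Given the earlier results there is no serious obstacle; the only points needing care are confirming that $\mathfrak{s}=-1$ is the correct specialization for $\mathbb{T}^{-1}$ (this is precisely what flips the sign in front of the adjacency matrix and yields an \emph{upper} rather than a lower bound) and observing that the relevant spectra are real, so that the inequality $\lambda\leq 2$ is meaningful.
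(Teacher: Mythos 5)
Your proposal is correct and follows essentially the same route as the paper's own proof: specialize Theorem \ref{LineGraphHAEq} with $\mathfrak{s}=-1$, use positive semidefiniteness of the Gram matrix $\Eta(\Phi,\omega)^*\Eta(\Phi,\omega)$ to conclude $2-\lambda \geq 0$, and transfer the bound to $A(\Lambda(\Phi))$ via Corollary \ref{LGAdjSpec}. The only cosmetic difference is that the paper runs the argument through an explicit eigenvector of $A(\Phi(\omega_{\Lambda}),\omega_{\Lambda})$, whereas you phrase it as a correspondence between the two spectra; the content is identical.
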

\begin{proof}
Let $\omega$ be an arbitrary orientation of $\Phi$.  Suppose that $\mathbf{x}$ is an eigenvector of $A(\Phi(\omega_{\Lambda}),\omega_{\Lambda})$ with associated eigenvalue $\lambda$.  By Theorem \ref{LineGraphHAEq} the following simplification can be made:
\[ \Eta(\Phi,\omega)^*\Eta(\Phi,\omega) \mathbf{x} =\big (2I-A(\Phi(\omega_{\Lambda}),\omega_{\Lambda})\big)\mathbf{x} = (2-\lambda)\mathbf{x}.\]
Therefore, $2-\lambda$ is an eigenvalue of $\Eta(\Phi,\omega)^*\Eta(\Phi,\omega)$.  Since $\Eta(\Phi,\omega)^*\Eta(\Phi,\omega)$ is positive semidefinite it must be that $2-\lambda\geq 0$ and therefore, $2\geq \lambda$.  By Corollary \ref{LGAdjSpec}, the result follows.
\end{proof}

Similarly for $\mathbb{T}^{1}$-gain graphs a lower bound on the eigenvalues of $A(\Lambda(\Phi))$ may be achieved.
\begin{thm}\label{LHEignP1} Let $\Phi=(\Gamma,\phi)$ be a $\mathbb{T}^{1}$-gain graph.  If $\lambda$ is an eigenvalue of $A(\Lambda(\Phi))$, then $-2 \leq \lambda$.
\end{thm}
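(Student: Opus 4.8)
The plan is to mirror the proof of Theorem \ref{LHEignM1}, exploiting the fact that the sign in front of the line graph adjacency matrix flips when $\mathfrak{s}=+1$ rather than $\mathfrak{s}=-1$. First I would fix an arbitrary orientation $\omega$ of $\Phi$ and invoke Theorem \ref{LineGraphHAEq}, which with $\mathfrak{s}=+1$ reads
\[ \Eta(\Phi,\omega)^*\Eta(\Phi,\omega)=2I+A(\Phi(\omega_{\Lambda}),\omega_{\Lambda}). \]
The decisive structural feature is that the $+1$ case leaves the adjacency matrix with a $+$ sign, so that the positive semidefinite Gram matrix on the left now controls a \emph{lower} bound on the line graph spectrum instead of an upper bound.

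Next I would take an eigenvector $\mathbf{x}$ of $A(\Phi(\omega_{\Lambda}),\omega_{\Lambda})$ with associated eigenvalue $\lambda$ and substitute into the displayed identity to obtain
\[ \Eta(\Phi,\omega)^*\Eta(\Phi,\omega)\,\mathbf{x}=\big(2I+A(\Phi(\omega_{\Lambda}),\omega_{\Lambda})\big)\mathbf{x}=(2+\lambda)\mathbf{x}, \]
so that $2+\lambda$ is an eigenvalue of $\Eta(\Phi,\omega)^*\Eta(\Phi,\omega)$. Since any matrix of the form $M^*M$ is positive semidefinite, every eigenvalue of $\Eta(\Phi,\omega)^*\Eta(\Phi,\omega)$ is nonnegative; hence $2+\lambda\geq 0$, which rearranges to $-2\leq\lambda$. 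Finally, Corollary \ref{LGAdjSpec} tells us that $A(\Lambda(\Phi))$ and $A(\Phi(\omega_{\Lambda}),\omega_{\Lambda})$ share the same spectrum, so the bound established for the chosen orientation transfers to the well defined eigenvalues of $A(\Lambda(\Phi))$, completing the argument.

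I do not anticipate any genuine obstacle here, since the statement is the exact dual of Theorem \ref{LHEignM1} and the proof is essentially a reflection of it. The only points requiring care are confirming that the Gram matrix $\Eta(\Phi,\omega)^*\Eta(\Phi,\omega)$ is positive semidefinite in the complex (rather than real) setting, which is immediate because $\mathbf{y}^*M^*M\mathbf{y}=\|M\mathbf{y}\|^2\geq 0$, and ensuring the passage from the orientation-dependent line graph to the switching class $\Lambda(\Phi)$ is legitimate, which is precisely what Corollary \ref{LGAdjSpec} (resting on Lemma \ref{simASwitch}) guarantees. Thus the substantive content is already packaged in Theorem \ref{LineGraphHAEq} and the spectral invariance under switching.
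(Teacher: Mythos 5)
Your proof is correct and is exactly the argument the paper intends: the paper omits an explicit proof of Theorem \ref{LHEignP1}, stating only that it follows ``similarly,'' and your argument is precisely the mirror of the proof of Theorem \ref{LHEignM1} with $\mathfrak{s}=+1$, using Theorem \ref{LineGraphHAEq}, positive semidefiniteness of $\Eta(\Phi,\omega)^*\Eta(\Phi,\omega)$, and Corollary \ref{LGAdjSpec}.
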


\noindent {\bf Question 3:} Can we classify all $\mathbb{T}^{-1}$-gain graphs with adjacency eigenvalues less than 2?  Similarly, can we classify all $\mathbb{T}^{1}$-gain graphs with adjacency eigenvalues grater than $-2$?  Perhaps one might consider these questions for the group of $n$th roots of unity $\boldsymbol\mu_n$ instead of the full circle group $\mathbb{T}$.  The bound in Theorem \ref{LHEignM1} holds for $\boldsymbol\mu_{2n}^{-1}$-gain graphs and the bound in Theorem $\ref{LHEignP1}$ holds for all $\boldsymbol\mu_{n}^{1}$-gain graphs.

  Recently, Krishnasamy, Lehrer
and Taylor have classified indecomposable star-closed line systems in $\mathbb{C}^n$ \cite{MR2470539, MR2542964}, which vastly generalizes the results of Cameron, Goethals, Seidel, and Shult \cite{MR0441787} as well as Cvetkovi{\'c}, Rowlinson, and Simi{\'c} \cite{MR2120511}.  If there is an answer to the above in connection to these indecomposable line systems in $\mathbb{C}^n$, a study of the exceptional graphs could generalize the related work of Chawathe and G.R. Vijayakumar on signed graphs \cite{MR1164766, MR1234728, MR884068, MR1078708}.
%%%%%%%%%%%%%%%%%%%%%%
%%%%%%%%%%%%%%%%%%%%%
%The group of $m^{\text{th}}$ roots of unity is denoted  $\boldsymbol\mu_m:=\{\zeta^k : k\in \{0,\ldots,m-1\} \text{ and } \zeta=e^{2\pi i/m}\}\leq \mathbb{C}^{\times}$. 

%\section{Let Digons be Bygones (Reduced Line Graphs)}

\section{ Acknowledgements}
The author would like to thank Thomas Zaslavsky and Marcin Mazur for their valuable comments and suggestions regarding this work.

\bibliographystyle{amsplain2}
\bibliography{mybib}

\end{document}